\documentclass[12pt,a4paper]{article}

\usepackage{amsmath,amsfonts,amssymb}
\usepackage{bbm}
\usepackage{cases}

\setcounter{page}{1}
\usepackage{lipsum}
\usepackage{subcaption}

\usepackage{xcolor}
\usepackage{graphicx}
\usepackage{ulem}

\frenchspacing
\textwidth              16 cm
\textheight             22.0 cm
\topmargin              -1.1  cm
\evensidemargin         0.0 cm
\oddsidemargin          -0.0 cm


\newtheorem{defn}{Definition}[section]
\newtheorem{thm}[defn]{Theorem}

\newtheorem{prop}[defn]{Proposition}
\newtheorem{cor}[defn]{Corollary}
\newtheorem{rmk}[defn]{Remark}



\newenvironment{proof}{{\bf Proof }}{{\vskip 0.1cm \hfill$\Box$}}

\begin{document} 

\noindent
{\Large \bf Quantitative analysis for $L^2$-estimates in linear elliptic equations via divergence-free transformation}
\\ \\
\bigskip
\noindent
{\bf Haesung Lee}  \\
\noindent
{\bf Abstract.}  
This paper establishes an explicit $L^2$-estimate for weak solutions $u$ to linear elliptic equations in divergence form with general coefficients and external source term $f$, stating that the $L^2$-norm of $u$ over $U$ is bounded by a constant multiple of the $L^2$-norm of $f$ over $U$. In contrast to classical approaches based on compactness arguments, the proposed method, which employs a divergence-free transformation method, provides a computable and explicit constant $C>0$. The $L^2$-estimate remains robust even when there is no zero-order term, and the analysis further demonstrates that the constant $C>0$ decreases as the diffusion coefficient or the zero-order term increases. These quantitative results provide a rigorous foundation for applications such as a posteriori error estimates in Physics-Informed Neural Networks (PINNs), where explicit error bounds are essential.
\\ \\
\noindent
{Mathematics Subject Classification (2020): {Primary: 35B45, 35J25, Secondary: 65N15, 68T07}}\\

\noindent 
{Keywords: $L^2$-estimates, Existence and uniqueness, Elliptic partial differential equations, Boundary value problems, Divergence-free transformation, Physics-Informed Neural Networks (PINNs)
}

\section{Introduction} \label{intro}
This paper deals with the applied aspects of the recent work \cite{L25jm}, focusing in particular on the quantitative analysis of the constant $C>0$ in the following $L^2$-estimate
\begin{equation} \label{l2estimate}
\|u \|_{L^2(U)} \leq C \|f \|_{L^2(U)}
\end{equation}
for the unique weak solution to the homogeneous boundary value problem for elliptic equations in divergence form (see Definition \ref{soltoweak}):
\begin{equation} \label{maineq2}
\left\{
\begin{alignedat}{2}
-{\rm div}(\gamma A \nabla u) + \langle \mathbf{H}, \nabla u \rangle + (c+\alpha) u &=f  && \quad \; \text{in } U,\\
u &= 0 &&\quad \; \text{on } \partial U,
\end{alignedat} \right.
\end{equation}
where the specific assumptions on the coefficients are stated in Theorem \ref{maintheor}.\\
From the perspective of pure mathematics in the theory of partial differential equations, one might be satisfied with simply obtaining the estimate \eqref{l2estimate}, proving the existence of a constant $C>0$, and demonstrating its independence from certain quantities such as $f$. However, in applied mathematics, it is often necessary to know this constant explicitly, and understanding its specific dependencies on other quantities is of critical importance.
For instance, in the case of Physics-Informed Neural Networks (PINNs), a prominent recent PDE solver technique based on artificial intelligence, one considers a trial function $\Psi \in H^{1,2}_0(U) \cap C^{\infty}(\mathbb{R}^d)$ (see \cite{LLF98, BN18, MZ22}).  By imposing appropriate conditions on the coefficients in \eqref{maineq2}, the existence and uniqueness of the solution, together with the corresponding $L^2$-estimate \eqref{l2estimate}, immediately yields the following a posteriori error estimate (cf.~Corollary \ref{pineroestim}):
\begin{equation} \label{pinnesti}
\Vert u-\Psi \Vert_{L^2(U)} \leq C \Vert f- \mathcal{L} \Psi \Vert_{L^2(U)},
\end{equation}
where 
\begin{equation} \label{lpsidefined}
\mathcal{L}\Psi = -{\rm div}(\gamma A \nabla \Psi) + \langle \mathbf{H}, \nabla \Psi \rangle + (c+\alpha) \Psi,
\end{equation}
(cf. Proposition \ref{convdivnon}).
Ultimately, by squaring both sides of \eqref{pinnesti} and reformulating it for Monte Carlo integration, we observe that as the right-hand side of the inequality (representing the residual that defines part of the loss function) decreases, the error on the left-hand side also diminishes.  In this context, a quantitative analysis of the constant $C>0$ is essential for determining the rate at which the error diminishes. \\
Indeed, the estimate \eqref{l2estimate} can be derived with an explicit computation of the constant $C>0$, utilizing the coercivity of the coefficients. When ${\rm div}\,\mathbf{H}$ is negative or $\alpha$ is sufficiently large, appropriate energy estimates, together with the Sobolev embedding theorem, may yield the desired result \eqref{l2estimate} (cf.~\cite[Lemma~2.1.3]{BKRS15}, \cite[Theorem~1.1]{L24}).
However, for a general diffusion operator, such coercivity conditions may not hold.  For instance, ${\rm div} \mathbf{H}$ could be a large positive quantity or a general drift $\mathbf{H}$ is given, or $c+\alpha$ might even be identically zero.  In such scenarios, deriving \eqref{l2estimate} and explicitly computing the constant $C>0$ via coercivity conditions becomes difficult.\\
A notable alternative for establishing \eqref{l2estimate} is the compactness method through a proof by contradiction. For instance, in \cite[Section 6.2, Theorem 6]{E10}(cf. \cite[Lemma 9.17]{GT}), assuming that \eqref{l2estimate} fails, one ultimately derives a contradiction, thus proving the validity of the estimate \eqref{l2estimate}. The key tools employed in this method are the Rellich–Kondrachov theorem (\cite[Section 5.7, Theorem 1]{E10})
and weak compactness arguments, along with a condition ensuring the uniqueness of the solution, which is essential for the argument to hold. For instance, for an operator $\mathcal{L}$ satisfying the weak maximum principle (cf. \cite[Theorem 2.1.8]{BKRS15}), an $L^2$-estimate like \eqref{l2estimate} can be established.   Another approach to obtain the estimate \eqref{l2estimate} is to first establish the existence and uniqueness of the solution, and then directly deduce the existence of a constant $C > 0$ by applying the bounded inverse theorem from functional analysis (see \cite[Corollary 2.7]{Br11}).
However, the most significant drawback of these approaches is that, although we can ascertain that the constant $C>0$ is independent of $f$, we obtain no information regarding its dependence on other quantities, such as the coefficients or the domain in \eqref{maineq2}. Therefore, since this method yields a non-constructive constant $C>0$, it is difficult to apply it to the error analysis of PINNs, where an explicit form of the constant is essential to derive quantitative estimates.   \\
As one possible approach to obtaining a concrete constant $C>0$, we refer to the following contraction estimate recently derived in \cite[Theorem~1.1(iii)]{L25jm} (cf. \cite[Theorem~1.1(ii)]{L24}):
\begin{equation} \label{l2contraesticl}
\|u \|_{L^2(U)} \leq \frac{K_1}{\alpha} \|f\|_{L^2(U)},
\end{equation}
where $K_1 \geq 1$ is the constant appearing in Theorem \ref{existrho}(ii).
Indeed, while the term $\frac{K_1}{\alpha}$ in \eqref{l2contraesticl} decreases as $\alpha$ increases, it may become large and thus inefficient for very small $\alpha$. Our main result, Theorem \ref{maintheor}, not only ensures the $L^2$-estimate remains robust even for such small values of $\alpha$, but also provides an improvement over the constant $\frac{K_1}{\alpha}$ in \eqref{l2contraesticl} (see Theorem \ref{maintheor}(iii) and \eqref{constatcompar} in Section \ref{conclrema}).\\
Before stating our main result, we introduce the primary condition on $A$ and $\mathbf{H}$ on $U$. \\

\noindent
{\bf (S)}:
{\it
$U$ is a bounded open subset of $\mathbb{R}^d$ with $d \geq 2$, $B_r(x_0)$ is an open ball in $\mathbb{R}^d$ with $\overline{U} \subset B_r(x_0)$, 
$\mathbf{H} \in L^p(U, \mathbb{R}^d)$, $h \in L^p(U)$  
with $p \in (d, \infty)$ satisfies $\| \mathbf{H}\| \leq h$ in $U$, and  $A=(a_{ij})_{1 \leq i,j \leq d}$ is a (possibly non-symmetric) matrix of measurable functions on $\mathbb{R}^d$ such that for some constants $M>0$ and $\lambda>0$ it holds that for a.e. $x \in \mathbb{R}^d$ and for all $\xi \in \mathbb{R}^d$,
\begin{equation*} 
\max_{1 \leq i,j \leq d} |a_{ij}(x)| \leq M, \;\; \; \; \langle A(x) \xi, \xi \rangle \geq \lambda \| \xi \|^2. \;\; 
\end{equation*}
}

\begin{thm} \label{maintheor}
Assume that {\bf (S)} holds. Let $\gamma \in [1, \infty)$ and $\alpha \in [0, \infty)$ be constants.  Let $\hat{d}:=d$ if  $d \geq 3$ and $\hat{d}$ is an arbitrary number in $(2, \infty)$ if $d=2$. Assume that $c \in L^1(U)$ with $c \geq 0$.  
Additionally, suppose that either condition {\rm (a)} or {\rm (b)} holds:
\begin{itemize}
\item[\rm (a)] $f \in L^q(U)$ with $q > \frac{d}{2}$ and $d \geq 2$
\item[\rm (b)] $f \in L^{\frac{2d}{d+2}}(U)$ and $c \in L^{\frac{2d}{d+2}}(U)$ with $d \geq 3$.
\end{itemize}
Then, there exists a unique weak solution $u \in H^{1,2}_0(U)$ to \eqref{maineq2}. In particular, $u \in H^{1,2}_0(U) \cap L^{\infty}(U)$ if (a) is assumed.
Moreover, under the assumption that either (a) or (b) holds, $u$ satisfies the following estimates:
\begin{itemize}
\item[(i)]
If $f \in L^{\frac{2\hat{d}}{\hat{d}+2}}(U)$, then
\begin{equation} \label{energyesma}
\|\nabla u\|_{L^{2}(U)} \leq \frac{K_1 \hat{k}}{\lambda \gamma} \|f \|_{L^{\frac{2 \hat{d}}{\hat{d}+2}}(U)}
\end{equation}
and
\begin{equation} \label{newl2estimviaen}
\| u \|_{L^2(U)} \leq \left(  \frac{d^2 \gamma \lambda}{8(d-1)^2 |U|^{\frac2d}} + \alpha \right)^{-\frac12} \left(  \frac{K_1^2 \hat{k}^2}{2 \gamma \lambda}	\right)^{\frac12} \|f \|_{L^{\frac{2 \hat{d}}{\hat{d}+2}}(U)}.
\end{equation}
where $K_1 \geq 1$ is a constant as in Theorem \ref{existrho} and $\hat{k}:= \frac{\hat{d}}{\hat{d}-2} |U|^{\frac12 -\frac{1}{\hat{d}}}$ if $d=2$ and $\hat{k}:=\frac{2(d-1)}{d-2}$ if $d \geq 3$.
\item[(ii)]
If $f \in L^2(U)$, then
\begin{equation} \label{originl2estima}
\|u\|_{L^2(U)} \leq K_1\left( \frac{d^2 \gamma \lambda}{4(d-1)^2 |U|^{\frac2d}} + \alpha \right)^{-1} \|f\|_{L^2(U)}
\end{equation}
and
\begin{equation} \label{contraestiml2}
\|u\|_{L^2(U)} \leq K_1^{\frac12}\left( \frac{d^2 \gamma \lambda}{4K_1(d-1)^2 |U|^{\frac{2}{d}}}  +\alpha  \right)^{-1} \|f\|_{L^2(U)}.
\end{equation}
\item[(iii)]
Let $C:=\min\Bigg( \left(  \frac{d^2 \gamma \lambda}{8(d-1)^2 |U|^{\frac2d}} + \alpha \right)^{-\frac12} \left(  \frac{K_1^2 \hat{k}^2}{2 \gamma \lambda}	\right)^{\frac12} |U|^{\frac{1}{\hat{d}}}, \;\;K_1\left( \frac{d^2 \gamma \lambda}{4(d-1)^2 |U|^{\frac2d}} + \alpha \right)^{-1}, \\
\;\; K_1^{\frac12}\left( \frac{d^2 \gamma \lambda}{4K_1(d-1)^2 |U|^{\frac{2}{d}}}  +\alpha  \right)^{-1}	    \Bigg)$, 
where $\hat{k}$ is defined as in (i). If $f \in L^2(U)$, then \eqref{l2estimate} holds.
\end{itemize}
\end{thm}
As a direct consequence of Theorem~\ref{maintheor}, we obtain the following a posteriori error estimate using a trial function $\Psi \in H^{1,2}_0(U) \cap C^2(\mathbb{R}^d)$.
\begin{cor}\label{pineroestim}
Assume {\bf (S)} and that ${\rm div}A \in L^2(U, \mathbb{R}^d)$. Let $\gamma \in [1, \infty)$ and $\alpha \in [0, \infty)$ be constants. Suppose that $c \in L^2(U)$ with $c \geq 0$, and that $f \in L^2(U)$. Let $\Psi \in H^{1,2}_0(U) \cap C^2(\mathbb{R}^d)$.
Then, \eqref{pinnesti} holds, where $C>0$ is the constant from Theorem \ref{maintheor}(iii) and $\mathcal{L}\Psi$ is defined as in \eqref{lpsidefined}.
\end{cor}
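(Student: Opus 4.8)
The plan is to reduce \eqref{pinnesti} to a direct application of Theorem~\ref{maintheor}(iii) by exploiting the linearity of the operator $\mathcal{L}$. First I would set $w := u-\Psi$, where $u \in H^{1,2}_0(U)$ is the unique weak solution to \eqref{maineq2} furnished by Theorem~\ref{maintheor}; since $\Psi \in H^{1,2}_0(U)$ by hypothesis, we have $w \in H^{1,2}_0(U)$. The essential preliminary observation is that $\Psi$, regarded as an element of $H^{1,2}_0(U)$, is itself a weak solution of \eqref{maineq2} with right-hand side $\mathcal{L}\Psi$ (in the sense of Definition~\ref{soltoweak}); this is precisely the content of Proposition~\ref{convdivnon}, whose proof relies on the hypothesis ${\rm div}\,A \in L^2(U,\mathbb{R}^d)$ so that the product rule ${\rm div}(\gamma A\nabla\Psi) = \gamma\langle{\rm div}\,A,\nabla\Psi\rangle + \gamma\sum_{i,j}a_{ij}\partial_i\partial_j\Psi$ holds and integration by parts of $\int_U \langle \gamma A\nabla\Psi,\nabla\varphi\rangle\,dx$ against test functions $\varphi \in C_0^\infty(U)$ is justified.

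Next I would verify that $\tilde f := f - \mathcal{L}\Psi$ belongs to $L^2(U)$ and satisfies the hypotheses of Theorem~\ref{maintheor}. Indeed $f \in L^2(U)$ by assumption, while $\mathcal{L}\Psi \in L^2(U)$: since $\Psi \in C^2(\mathbb{R}^d)$ and $U$ is bounded, $\nabla\Psi$ and $D^2\Psi$ are bounded on $\overline U$, so ${\rm div}(\gamma A\nabla\Psi) \in L^2(U)$ by the product rule above together with ${\rm div}\,A \in L^2(U,\mathbb{R}^d)$ and $\max_{i,j}|a_{ij}| \leq M$; moreover $\langle \mathbf{H},\nabla\Psi\rangle \in L^2(U)$ because $\mathbf{H} \in L^p(U,\mathbb{R}^d) \subset L^2(U,\mathbb{R}^d)$, and $(c+\alpha)\Psi \in L^2(U)$ because $c \in L^2(U)$. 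Hence $\tilde f \in L^2(U)$. Since $U$ is bounded, $L^2(U) \hookrightarrow L^{\frac{2d}{d+2}}(U)$ and $c \in L^2(U) \hookrightarrow L^{\frac{2d}{d+2}}(U)$, so condition~(b) of Theorem~\ref{maintheor} holds when $d \geq 3$; when $d=2$, we have $2 > \tfrac{d}{2} = 1$, so condition~(a) holds with $q=2$. In either case Theorem~\ref{maintheor} applies with source term $\tilde f$, yielding a unique weak solution $\tilde u \in H^{1,2}_0(U)$ to \eqref{maineq2} with right-hand side $\tilde f$, and the bound $\|\tilde u\|_{L^2(U)} \leq C \|\tilde f\|_{L^2(U)}$ with $C$ the constant of part~(iii).

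Finally I would use linearity and uniqueness to identify $w$ with $\tilde u$: subtracting the weak formulation for $u$ (source $f$) from the weak formulation for $\Psi$ (source $\mathcal{L}\Psi$) shows that $w = u - \Psi \in H^{1,2}_0(U)$ is a weak solution of \eqref{maineq2} with source $\tilde f = f - \mathcal{L}\Psi$; by the uniqueness assertion of Theorem~\ref{maintheor}, $w = \tilde u$, whence
$\|u-\Psi\|_{L^2(U)} = \|\tilde u\|_{L^2(U)} \leq C\|\tilde f\|_{L^2(U)} = C\|f-\mathcal{L}\Psi\|_{L^2(U)}$, which is \eqref{pinnesti}. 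The only genuinely nontrivial ingredient is the first step, namely verifying via Proposition~\ref{convdivnon} that $\Psi$ solves \eqref{maineq2} weakly with data $\mathcal{L}\Psi$; the remainder is bookkeeping with function-space inclusions and the linearity of $\mathcal{L}$.
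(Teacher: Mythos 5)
Your proof is correct and follows essentially the same route as the paper's: use Proposition~\ref{convdivnon} (with ${\rm div}\,A \in L^2$) to see that $\mathcal{L}\Psi \in L^2(U)$ and that $\Psi$ solves \eqref{maineq2} weakly with data $\mathcal{L}\Psi$, then invoke linearity so $u-\Psi$ solves it with data $f-\mathcal{L}\Psi$, and apply Theorem~\ref{maintheor}(iii). The additional checks you perform (term-by-term integrability of $\mathcal{L}\Psi$, identifying which of conditions (a)/(b) is active for each $d$, and the explicit appeal to uniqueness to identify $u-\Psi$ with the solution the theorem produces) are details the paper leaves tacit, not a different argument.
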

A key ingredient in the proof of Theorem~\ref{maintheor} is the divergence-free transformation developed in \cite{L25jm} (see Theorem~\ref{mainequivthm}).
This transformation effectively absorbs the drift term $\mathbf{H}$ into a modified diffusion matrix and yields an equivalent equation with a divergence-free drift $\mathbf{B}$, which plays a crucial role in deriving our estimates.\\
This paper is organized as follows.
Section~\ref{framew} introduces the essential notations and key ingredients required for the proof of our main results.
Section~\ref{profofmainre} presents the proofs of the main results.
Finally, Section~\ref{conclrema} provides concluding remarks in this paper with further discussion.

\section{Framework} \label{framew}
In this paper, $\mathbb{R}^d$ denotes the $d$-dimensional Euclidean space with the standard inner product $\langle \cdot, \cdot \rangle$ and the corresponding Euclidean norm $\| \cdot \|$. For $x_0 \in \mathbb{R}^d$ and $r>0$, $B_r(x_0)$ denotes the open ball centered at $x_0$ with radius $r$. The Lebesgue measure on $\mathbb{R}^d$ is denoted by $dx$, and for a measurable set $E \subset \mathbb{R}^d$, $dx(E)$ is written as $|E|$. Throughout, $U$ is a bounded open subset of $\mathbb{R}^d$. Let $s \in [1, \infty]$. $L^s(U)$ denotes the standard Lebesgue space of functions on $U$ that are $s$-integrable with respect to the Lebesgue measure $dx$, equipped with the norm $\|\cdot\|_{L^s(U)}$. Similarly, $L^s(U, \mathbb{R}^d)$ is the space of vector fields whose  components belong to $L^s(U)$, equipped with the norm, $\| \mathbf{F} \|_{L^s(U, \mathbb{R}^d)}:=\big \| \| \mathbf{F} \| \big \|_{L^s(U)}$, $\mathbf{F} \in L^s(U, \mathbb{R}^d)$. $C(U)$ and $C(\overline{U})$ are the spaces of continuous functions on $U$ and its closure $\overline{U}$, respectively. For $k \in \mathbb{N} \cup \{\infty\}$, $C^k(U)$ is the space of $k$-times continuously differentiable functions, and $C_0^k(U)$ denotes the subset of functions in $C^k(U)$ with compact support in $U$. The Sobolev space $H^{1,2}(U)$ consists of functions $f \in L^2(U)$ whose weak partial derivatives, denoted by $\partial_i f$, also belong to $L^2(U)$. The weak gradient is written as $\nabla f = (\partial_1 f, \ldots, \partial_d f)$. The space $H^{1,2}_0(U)$ is defined as the closure of $C_0^{\infty}(U)$ in the $H^{1,2}(U)$-norm. For convenience, we also define the set of bounded functions in $H^{1,2}_0(U)$ as $H^{1,2}_0(U)_b := H^{1,2}_0(U) \cap L^{\infty}(U)$. The Sobolev space $H^{2,s}(U)$ is the space of functions $f \in L^s(U)$ whose first and second weak derivatives $\partial_i f$ and $\partial_i \partial_j f$ belong to $L^s(U)$ for all $i, j = 1, \ldots, d$, equipped with the standard $H^{2,s}(U)$-norm.
The weak Hessian matrix of a function $f$ is denoted by $\nabla^2 f := (\partial_i \partial_j f){1 \leq i,j \leq d}$. The trace of a square matrix $B = (b_{ij})_{1 \leq i,j \leq d}$ is defined by $\operatorname{trace}(B) := \sum_{i=1}^d b_{ii}$. The central object of study in this paper is the weak solution to the elliptic boundary value problem, defined as follows.

\begin{defn} \label{soltoweak}
Let $\gamma \in [1, \infty)$ and $\alpha \in [0, \infty)$. Let $A = (a_{ij})_{1 \leq i,j \leq d}$ be a matrix of bounded and measurable functions on $\mathbb{R}^d$, and let $\mathbf{H} \in L^2(U, \mathbb{R}^d)$, $c \in L^1(U)$ and $f \in L^1(U)$.  
We say that $u$ is a weak solution to \eqref{maineq2} if $u \in H^{1,2}_0(U)$ with $(c+\alpha)u \in L^1(U)$ satisfies
$$
\int_U \langle \gamma A \nabla u, \nabla \psi \rangle + \langle \mathbf{H}, \nabla u \rangle \psi + (c+\alpha) u \psi \, dx = \int_U f \psi \, dx \quad \text{for all } \psi \in C_0^{\infty}(U).
$$
\end{defn}
The following provides the definitions for the divergence of matrix valued functions and vector fields. This concept is central to Corollary 1.2, which requires the assumption that ${\rm div}\,A \in L^2(U, \mathbb{R}^d)$ to establish the a posteriori error estimate for Physics-Informed Neural Networks (PINNs). The divergence of the matrix $A=(a_{ij})_{1 \leq i,j \leq d}$ is a vector field whose $j$-th component is the weak divergence of the $j$-th column of $A$.
\begin{defn}
\label{basidefn}
\begin{itemize}
\item[(i)] Let $U \subset \mathbb{R}^d$ be an open set with $d \geq 2$, $B = (b_{ij})_{1 \leq i,j \leq d}$ be a (possibly non-symmetric) matrix of locally integrable functions on $U$, and $\mathbf{E}=(e_1,\ldots, e_d) \in L^1_{loc}(U, \mathbb{R}^d)$. We write ${\rm div} B = \mathbf{E}$ if
\begin{equation} \label{defndivma}
\int_{U} \sum_{i,j=1}^d b_{ij} \partial_i \phi_j \, dx = - \int_{U} \sum_{j=1}^d e_j \phi_j \, dx \quad \text{ for all $\phi_j \in C_0^{\infty}(U)$, $j=1, \ldots, d$}.
\end{equation}
In other words, if we consider the family of column vectors
\[
\mathbf{b}_j=\begin{pmatrix}
b_{1j} \\ \vdots \\ b_{dj}
\end{pmatrix},\quad 1\le j\le d,\  \text{ i.e.}\  B=(\mathbf{b}_1 | \ldots |\mathbf{b}_d),
\]
then \eqref{defndivma} can be rewritten using inner products as
\begin{equation*}
\int_{U} \sum_{j=1}^d \langle \mathbf{b}_j, \nabla \phi_j \rangle \, dx = - \int_{U} \langle \mathbf{E}, \boldsymbol{\Phi} \rangle \, dx,
\end{equation*}
for all test vector fields $\boldsymbol{\Phi}=(\phi_1, \ldots, \phi_d) \in (C_0^{\infty}(U))^d$.\
\item[(ii)] Let $U \subset \mathbb{R}^d$ be an open set with $d \geq 2$, $\mathbf{F} \in L^1_{loc}(U, \mathbb{R}^d)$ and $f \in L^1_{loc}(U)$. We write ${\rm div} \mathbf{F}=f$ if
$$
\int_{U} \langle \mathbf{F}, \nabla \varphi \rangle \, dx = -\int_{U} f \varphi \, dx, \quad \text{ for all } \varphi \in C_0^{\infty}(U).
$$
\end{itemize}
\end{defn}
The following lemma, proven in \cite{L25bv}, guarantees that $A \nabla u$ has a weak divergence.
\begin{prop}{\bf (\cite[Proposition 5.6]{L25bv})} \label{convdivnon}    \\
Let $d \geq 2$, $U$ be an open subset of $\mathbb{R}^d$, $u \in H^{2,1}_{loc}(U)$ with $\nabla u \in L^2_{loc}(U, \mathbb{R}^d)$, and $A=(a_{ij})_{1 \leq i,j \leq d}$ be a (possibly non-symmetric)
matrix of functions in $L^{\infty}_{loc}(U)$ with ${\rm div} A \in L_{loc}^2(U, \mathbb{R}^d)$. Then, 
$$
{\rm div } (A \nabla u) = {\rm trace}(A \nabla^2 u) + \langle {\rm  div} A, \nabla u \rangle \;\; \text{ in $U$}.
$$
\end{prop}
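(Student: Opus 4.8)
The statement is a distributional Leibniz rule, so the natural route is mollification together with the classical product rule for smooth data. Fix an arbitrary $\varphi \in C_0^{\infty}(U)$ and choose an open set $V$ with $\mathrm{supp}\,\varphi \subset V$ and $\overline{V}$ compact in $U$; by Definition \ref{basidefn}(ii) it suffices to verify
\[
\int_U \langle A\nabla u,\nabla\varphi\rangle\,dx = -\int_U \bigl(\mathrm{trace}(A\nabla^2 u) + \langle\mathrm{div}\,A,\nabla u\rangle\bigr)\varphi\,dx .
\]
First I would record that both sides make sense: $A\nabla u \in L^2_{loc}(U,\mathbb{R}^d)$ since $A\in L^\infty_{loc}$ and $\nabla u\in L^2_{loc}$; $\mathrm{trace}(A\nabla^2 u)\in L^1_{loc}$ since $A\in L^\infty_{loc}$ and $\nabla^2 u\in L^1_{loc}$ (from $u\in H^{2,1}_{loc}$); and $\langle\mathrm{div}\,A,\nabla u\rangle\in L^1_{loc}$ by Cauchy--Schwarz since $\mathrm{div}\,A,\nabla u\in L^2_{loc}$. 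This last point is precisely where the hypotheses $\mathrm{div}\,A\in L^2_{loc}$ and $\nabla u\in L^2_{loc}$ (the latter a genuine extra assumption beyond $H^{2,1}_{loc}$ when $d\ge 3$) are used.

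Next I would regularize: let $(\rho_\varepsilon)$ be a standard mollifier and put $A^\varepsilon:=A*\rho_\varepsilon$, $u^\varepsilon:=u*\rho_\varepsilon$, which are smooth on $V$ for small $\varepsilon$. The convergences I would invoke as $\varepsilon\to 0$ are: $A^\varepsilon\to A$ a.e.\ on $V$ with $\sup_\varepsilon\|A^\varepsilon\|_{L^\infty(V)}<\infty$; $\nabla u^\varepsilon=(\nabla u)*\rho_\varepsilon\to\nabla u$ in $L^2_{loc}(V)$; $\nabla^2 u^\varepsilon=(\nabla^2 u)*\rho_\varepsilon\to\nabla^2 u$ in $L^1_{loc}(V)$; and, crucially, $\mathrm{div}\,A^\varepsilon=(\mathrm{div}\,A)*\rho_\varepsilon\to\mathrm{div}\,A$ in $L^2_{loc}(V)$, where the identity $\mathrm{div}\,A^\varepsilon=(\mathrm{div}\,A)*\rho_\varepsilon$ follows from the fact that mollification commutes with the (column-wise) distributional divergence of a matrix field, which needs only $\mathrm{div}\,A\in L^1_{loc}$. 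For the smooth pair $(A^\varepsilon,u^\varepsilon)$ the classical product rule gives $\mathrm{div}(A^\varepsilon\nabla u^\varepsilon)=\mathrm{trace}(A^\varepsilon\nabla^2 u^\varepsilon)+\langle\mathrm{div}\,A^\varepsilon,\nabla u^\varepsilon\rangle$ pointwise on $V$, and integration by parts against $\varphi$ (legitimate since $\mathrm{supp}\,\varphi\Subset V$ and everything is smooth) yields
\[
\int_U \langle A^\varepsilon\nabla u^\varepsilon,\nabla\varphi\rangle\,dx = -\int_U \bigl(\mathrm{trace}(A^\varepsilon\nabla^2 u^\varepsilon) + \langle\mathrm{div}\,A^\varepsilon,\nabla u^\varepsilon\rangle\bigr)\varphi\,dx .
\]

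It then remains to pass to the limit term by term on $\mathrm{supp}\,\varphi$. For the left-hand side I would split $A^\varepsilon\nabla u^\varepsilon-A\nabla u=A^\varepsilon(\nabla u^\varepsilon-\nabla u)+(A^\varepsilon-A)\nabla u$, bounding the first summand by $\|A^\varepsilon\|_{L^\infty}\|\nabla u^\varepsilon-\nabla u\|_{L^1(\mathrm{supp}\,\varphi)}\|\nabla\varphi\|_{L^\infty}\to 0$ and the second by dominated convergence (using $|A^\varepsilon|\le C$, $A^\varepsilon\to A$ a.e., $|\nabla u|\,|\nabla\varphi|\in L^1$). The term $\int\mathrm{trace}(A^\varepsilon\nabla^2 u^\varepsilon)\varphi$ is treated identically with $\nabla^2 u$ in place of $\nabla u$. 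For $\int\langle\mathrm{div}\,A^\varepsilon,\nabla u^\varepsilon\rangle\varphi$ I would write $\mathrm{div}\,A^\varepsilon\cdot\nabla u^\varepsilon-\mathrm{div}\,A\cdot\nabla u=\mathrm{div}\,A^\varepsilon\cdot(\nabla u^\varepsilon-\nabla u)+(\mathrm{div}\,A^\varepsilon-\mathrm{div}\,A)\cdot\nabla u$ and estimate both pieces by Cauchy--Schwarz on $\mathrm{supp}\,\varphi$, using that $\|\mathrm{div}\,A^\varepsilon\|_{L^2(\mathrm{supp}\,\varphi)}$ stays bounded while $\|\nabla u^\varepsilon-\nabla u\|_{L^2}\to 0$ and $\|\mathrm{div}\,A^\varepsilon-\mathrm{div}\,A\|_{L^2}\to 0$. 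Since $\varphi$ was arbitrary, the claimed identity follows. The only real subtlety is this final bookkeeping: forcing every product to converge dictates exactly the pairing structure $L^\infty\times L^1$ for the Hessian term and $L^2\times L^2$ for the $\langle\mathrm{div}\,A,\nabla u\rangle$ term, which is the reason for the stated integrability hypotheses; once those are in place the passage to the limit is routine.
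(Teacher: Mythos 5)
The paper itself offers no proof of this proposition: it is imported verbatim from \cite[Proposition 5.6]{L25bv}, so there is no in-paper argument to compare against line by line. Judged on its own, your mollification proof is correct and complete in outline: testing against a fixed $\varphi\in C_0^\infty(U)$ on a compactly contained neighbourhood $V$, regularizing both $A$ and $u$, invoking the classical pointwise product rule for the smooth pair, and passing to the limit with the pairing structure $L^\infty\times L^1$ for the Hessian term, $L^\infty\times L^2$ (or dominated convergence) for $A\nabla u$, and $L^2\times L^2$ for $\langle\operatorname{div}A,\nabla u\rangle$ is exactly the standard route, and your identification of where each integrability hypothesis is used (in particular why $\nabla u\in L^2_{loc}$ is needed beyond $u\in H^{2,1}_{loc}$) is accurate. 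Two small points are worth making explicit rather than implicit: first, the identity $\operatorname{div}A^\varepsilon=(\operatorname{div}A)*\rho_\varepsilon$ and the mollifications themselves are only defined on the shrunken set $\{x: \operatorname{dist}(x,\partial U)>\varepsilon\}$, so one should fix $\varepsilon<\operatorname{dist}(\overline V,\partial U)$ before any limit is taken (you essentially do this by choosing $V$ first); second, the smooth product rule yields $\sum_{i,j}(\partial_i a^\varepsilon_{ij})\partial_j u^\varepsilon+\sum_{i,j}a^\varepsilon_{ij}\partial_i\partial_j u^\varepsilon$, and matching this with $\langle\operatorname{div}A^\varepsilon,\nabla u^\varepsilon\rangle+\operatorname{trace}(A^\varepsilon\nabla^2u^\varepsilon)$ uses the paper's column-wise convention for $\operatorname{div}A$ (Definition \ref{basidefn}(i)) together with the symmetry of the Hessian; with the row convention one would instead produce $\operatorname{div}(A^T)$, so since $A$ is allowed to be non-symmetric this bookkeeping deserves one explicit line. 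With those remarks added, the argument stands as a self-contained substitute for the cited result.
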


The following is a basic approximation result from \cite{L25jm}, which is used in the proof of Theorem~\ref{maintheor}.
\begin{prop} {\bf (\cite[Proposition A.8]{L25jm})} \label{propapprobdd} \\
Let $v \in H^{1,2}_0(U)_b$. Then, there exist a constant $M>0$ which only depends on $\|v\|_{L^{\infty}(U)}$ and a sequence of functions $(v_n)_{n \geq 1}$ in $C_0^{\infty}(U)$ such that $\sup_{n \geq 1} \|v_n \| \leq M$,
 $\lim_{n \rightarrow \infty} v_n = v$ a.e. on $U$ and $\lim_{n \rightarrow \infty} v_n = v$ in $H^{1,2}_0(U)$.
\end{prop}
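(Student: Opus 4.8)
The plan is to take any $C_0^\infty(U)$-approximation of $v$ supplied by the definition of $H^{1,2}_0(U)$ and compose it with a fixed smooth truncation; the truncation renders the sequence uniformly bounded in $L^\infty(U)$ while, crucially, leaving its $H^{1,2}$-limit unchanged. This is the standard truncation device, made slightly easier here by the fact that the members of the approximating sequence are already smooth.

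Concretely, I would set $K:=\|v\|_{L^\infty(U)}$ and fix once and for all a function $\beta_K\in C^\infty(\mathbb{R})$ with $\beta_K(0)=0$, with $\beta_K(t)=t$ and $\beta_K'(t)=1$ for all $|t|\le K$, with $0\le\beta_K'\le1$ on $\mathbb{R}$, and with $|\beta_K|\le K+1$ on $\mathbb{R}$; such a $\beta_K$ is obtained by extending the identity on $[-K,K]$ by a smooth, nondecreasing, saturating tail. Put $M:=K+1$, which depends only on $\|v\|_{L^\infty(U)}$. By the definition of $H^{1,2}_0(U)$ there is a sequence $(w_n)_{n\ge1}$ in $C_0^\infty(U)$ with $w_n\to v$ in $H^{1,2}(U)$; since $w_n\to v$ in $L^2(U)$, after passing to a subsequence (which I relabel) I may also assume $w_n\to v$ a.e.\ on $U$. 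I then define $v_n:=\beta_K\circ w_n$. As a composition of smooth functions, $v_n\in C^\infty(\mathbb{R}^d)$, and since $\beta_K(0)=0$ we have $\{v_n\neq0\}\subseteq\{w_n\neq0\}$, so $v_n$ has compact support in $U$; hence $v_n\in C_0^\infty(U)$, and $\|v_n\|_{L^\infty(U)}\le K+1=M$ for every $n$.

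It then remains to verify the two convergences. Continuity of $\beta_K$ together with $w_n\to v$ a.e.\ gives $v_n\to\beta_K(v)$ a.e., and $\beta_K(v)=v$ a.e.\ because $|v|\le K$ a.e.; hence $v_n\to v$ a.e.\ on $U$, and, since $|U|<\infty$ and $|v_n|\le M$, also $v_n\to v$ in $L^2(U)$ by dominated convergence. For the gradients, the classical chain rule (legitimate since $w_n$ is smooth) gives $\nabla v_n=\beta_K'(w_n)\,\nabla w_n$, and I would split
\[
\nabla v_n-\nabla v=\beta_K'(w_n)\bigl(\nabla w_n-\nabla v\bigr)+\bigl(\beta_K'(w_n)-1\bigr)\nabla v .
\]
The first term has $L^2(U)$-norm at most $\|\nabla w_n-\nabla v\|_{L^2(U)}\to0$ because $|\beta_K'|\le1$; the second term tends to $0$ a.e.\ (since $w_n\to v$ a.e., $\beta_K'$ is continuous, and $\beta_K'\equiv1$ on $[-K,K]$, which contains the essential range of $v$) and is dominated by $2\|\nabla v\|\in L^2(U)$, so it tends to $0$ in $L^2(U)$ by dominated convergence. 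Thus $v_n\to v$ in $H^{1,2}(U)$, and since each $v_n\in C_0^\infty(U)\subset H^{1,2}_0(U)$, this is convergence in $H^{1,2}_0(U)$, which completes the argument.

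The only point that genuinely needs care is the design of $\beta_K$: it must be truly $C^\infty$ so that $\beta_K\circ w_n$ stays in $C_0^\infty(U)$ and not merely in $H^{1,2}_0(U)_b$; it must satisfy $\beta_K(0)=0$ so that composition with $w_n$ preserves compact support; and its derivative must be identically $1$ on the whole interval $[-K,K]$, since otherwise $\beta_K'(w_n)\nabla w_n$ would converge to $\beta_K'(v)\nabla v$, which need not equal $\nabla v$, and the $H^{1,2}$-limit of $(v_n)$ would be the wrong function. Once $\beta_K$ has these three properties, the remainder is a routine double application of the dominated convergence theorem.
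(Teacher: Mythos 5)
Your proof is correct. Note that the paper does not prove this proposition at all: it is quoted verbatim from \cite[Proposition A.8]{L25jm}, so there is no in-paper argument to compare against. Your smooth-truncation device ($v_n=\beta_K\circ w_n$ with $\beta_K$ equal to the identity on $[-K,K]$, $0\le\beta_K'\le1$, $\beta_K(0)=0$, and a saturating tail of total increase at most $1$ so that $\|v_n\|_{L^\infty(U)}\le K+1=M$) is the standard route to such approximation statements, and each step — preservation of compact support via $\beta_K(0)=0$, a.e.\ convergence after extracting a subsequence of the $H^{1,2}_0$-approximating sequence, the splitting $\nabla v_n-\nabla v=\beta_K'(w_n)(\nabla w_n-\nabla v)+(\beta_K'(w_n)-1)\nabla v$ with $|\beta_K'|\le1$ and dominated convergence for the second term — is carried out correctly, so the argument is a complete, self-contained proof of the cited result.
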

From now on, we adopt the main approach developed in \cite{L25jm}.  
The central idea of \cite{L25jm} is to transform \eqref{maineq2} into an equation with divergence-free drifts (referred to as the divergence-free transformation).  
To this end, the key step is to construct an appropriate weight $\rho\,dx$ as in the following result. Remarkably, the constant $K_1 \geq 1$ in the following result is independent of $\gamma \in [1, \infty)$. The fundamental idea behind the following result originates from \cite[Theorem 1]{BRS12} (cf. \cite[Chapter 2]{BKRS15}).
\begin{thm}{\bf (\cite[Theorem 3.1]{L25jm})} 
\label{existrho} \\[4pt]
Assume that {\bf (S)} holds and let $\gamma \in [1, \infty)$. Then, the following hold:
\begin{itemize}
\item[(i)]
Let $x_1 \in U$. Then, there exists $\rho \in H^{1,2}(B_{4r}(x_0)) \cap C(B_{4r}(x_0))$ with $\rho(x)>0$ for all $x \in B_{4r}(x_0)$ and $\rho(x_1)=1$ such that
\begin{equation} \label{infinva}
\int_{B_{4r}(x_0)} \langle \gamma A^T \nabla \rho  + \rho \mathbf{H}, \nabla \varphi \rangle dx = 0, \quad \text{ for all $\varphi \in C_0^{\infty}(B_{4r}(x_0))$}.
\end{equation}
\item[(ii)]
Let $\rho$ be as in Theorem \ref{existrho}(i). Then, there exists a constant $K_1 \geq 1$ which only depends on $d$, $\lambda$, $M$, $r$, $p$, $\|h\|_{L^p(U)}$ such that \;$\max_{\overline{B}_{3r}(x_0)} \rho  \leq K_1 \min_{\overline{B}_{3r}(x_0)} \rho$, and hence
$$
1 \leq \max_{\overline{U}} \rho  \leq K_1 \min_{\overline{U}} \rho \leq K_1.
$$
\end{itemize}
\end{thm}
\begin{proof}
(i) By \cite[Theorem~3.1(i)]{L25jm}, we have $\rho \in H^{1,2}(B_{4r}(x_0)) \cap C(B_{4r}(x_0))$ with $\rho(x) > 0$ for all $x \in B_{4r}(x_0)$ and $\rho(x_1) = 1$, such that
\begin{equation} \label{scalgamequ}
\int_{B_{4r}(x_0)} \left\langle A^T \nabla \rho + \rho \left(\frac{1}{\gamma} \mathbf{H} \right), \nabla \varphi \right\rangle \, dx = 0, \quad \text{for all } \varphi \in C_0^{\infty}(B_{4r}(x_0)).
\end{equation}
Thus, \eqref{infinva} holds, and hence the assertion (i) follows.\\
(ii) Applying \cite[Theorem~3.1(ii)]{L25jm} to \eqref{scalgamequ} and noting that $\| \frac{1}{\gamma} \mathbf{H} \| \leq h$ in $U$, we obtain the desired result for (ii).
\end{proof}
The following is the main ingredient in this paper that establishes the equivalence between \eqref{maineq2} and an equation with divergence-free drifts.
\begin{thm}{\bf (\cite[Theorem 3.2]{L25jm}, Divergence-free transformation)} \label{mainequivthm}  \\
Assume that {\bf (S)} holds.
Let $\rho \in H^{1,2}(U) \cap C(\overline{U})$ be a strictly positive function on $\overline{U}$ constructed as in Theorem \ref{existrho}.
Define 
\begin{equation} \label{divfreevect}
\mathbf{B}:=\mathbf{H}+\frac{1}{\rho} (\gamma A)^T \nabla \rho \quad \text{on $U$}.
\end{equation}
Then, $\rho \mathbf{B} \in L^2(U, \mathbb{R}^d)$ and 
\begin{equation} \label{divfreecondi}
\int_{U} \langle \rho \mathbf{B}, \nabla \varphi \rangle dx = 0, \quad \text{ for all $\varphi \in C_0^{\infty}(U)$}.
\end{equation}
Let $\gamma \in [1, \infty)$ and $\alpha \in [0, \infty)$ be constants. Let $f \in L^1(U)$ and $u \in H^{1,2}_0(U)$ with $(c+\alpha)u \in L^1(U)$. Then the following (i) and (ii) are equivalent.
\begin{itemize}
\item[(i)]
\begin{equation} \label{maineqfir}
\int_U \langle \gamma A \nabla u, \nabla \psi \rangle +\langle \mathbf{H}, \nabla u \rangle \psi  + (c+\alpha)u \psi \,dx = \int_U f \psi\, dx \quad \text{ for all } \psi \in C_0^{\infty}(U).
\end{equation}
\item[(ii)]
\begin{equation} \label{maineqfdiv}
\int_U \langle \rho (\gamma A) \nabla u, \nabla \varphi \rangle   +  \langle \rho \mathbf{B}, \nabla u \rangle \varphi   + \rho (c+\alpha) u \varphi \,dx = \int_U f \rho \varphi  \,dx \quad \text{ for all } \varphi \in C_0^{\infty}(U).
\end{equation}
\end{itemize}
\end{thm}
\begin{proof}
The assertion immediately follows from \cite[Theorem~3.2]{L25jm}, where $A$ and $c$ in \cite[Theorem~3.2]{L25jm} are replaced by $\gamma A$ and $c+\alpha$ in the present setting, respectively.
\end{proof}
The following result provides an explicit computation of the constant in the Poincaré type inequality derived from the Gagliardo–Nirenberg–Sobolev inequality
\cite[Section 5.6, Theorem 1]{E10} (cf. \cite[Theorem 4.8]{EG15}).
\begin{prop}  \label{mainpoincare}
The following inequality holds:
\begin{align} \label{poincareineq}
\|f\|_{L^2(U)} \leq \frac{2(d-1)}{d} |U|^{\frac{1}{d}} \| \nabla f\|_{L^2(U)} \quad \text{for all $f \in H^{1,2}_0(U)$.}
\end{align}
\end{prop}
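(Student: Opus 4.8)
The plan is to deduce \eqref{poincareineq} from the Gagliardo--Nirenberg--Sobolev (GNS) inequality applied with one particular Lebesgue exponent, followed by H\"older's inequality on the bounded set $U$ and a density argument; the crucial step is the choice of that exponent.

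First I would recall the GNS inequality in the form of \cite[Section~5.6, Theorem~1]{E10} (cf.\ \cite[Theorem~4.8]{EG15}): for every $p$ with $1 \le p < d$ there is a constant $C(d,p)$ such that $\|\varphi\|_{L^{\frac{dp}{d-p}}(\mathbb{R}^d)} \le C(d,p)\,\|\nabla\varphi\|_{L^p(\mathbb{R}^d)}$ for all $\varphi \in C_0^\infty(\mathbb{R}^d)$, and — tracking the standard proof (the multilinear estimate for $p=1$, which holds with constant $1$, together with the substitution of $|\varphi|^{\gamma}$ for $\varphi$ with $\gamma = \frac{p(d-1)}{d-p} \ge 1$ and H\"older's inequality) — the constant can be taken to be $C(d,p) = \frac{p(d-1)}{d-p}$. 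The key observation is to choose $p := \frac{2d}{d+2}$. For $d \ge 2$ this satisfies $1 \le p < d$, and a direct computation gives $\frac{dp}{d-p} = 2$ and $\frac{p(d-1)}{d-p} = \frac{2(d-1)}{d}$. Thus
\[
\|\varphi\|_{L^2(\mathbb{R}^d)} \le \frac{2(d-1)}{d}\,\|\nabla\varphi\|_{L^{\frac{2d}{d+2}}(\mathbb{R}^d)} \qquad \text{for all } \varphi \in C_0^\infty(\mathbb{R}^d),
\]
and for $\varphi \in C_0^\infty(U)$, extended by zero outside $U$, both norms may be replaced by the corresponding norms over $U$.

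Next I would pass to an arbitrary $f \in H^{1,2}_0(U)$ by density. Since $\frac{2d}{d+2} < 2$ and $|U| < \infty$, H\"older's inequality gives $\|\mathbf{g}\|_{L^{\frac{2d}{d+2}}(U,\mathbb{R}^d)} \le |U|^{\frac1d}\,\|\mathbf{g}\|_{L^2(U,\mathbb{R}^d)}$ for every $\mathbf{g} \in L^2(U,\mathbb{R}^d)$ (the exponent $\frac1d$ arising from $\frac{d+2}{2d} = \frac12 + \frac1d$). Hence both sides of the displayed inequality, taken over $U$, are continuous with respect to the $H^{1,2}(U)$-norm, so, since $C_0^\infty(U)$ is dense in $H^{1,2}_0(U)$, the estimate $\|f\|_{L^2(U)} \le \frac{2(d-1)}{d}\|\nabla f\|_{L^{\frac{2d}{d+2}}(U)}$ holds for all $f \in H^{1,2}_0(U)$. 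Applying the H\"older bound above with $\mathbf{g} = \nabla f$ and combining yields \eqref{poincareineq}.

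I do not expect a genuine obstacle here; the only point requiring care is bookkeeping the constant in the GNS inequality — one must verify that the cited proof indeed yields $\frac{p(d-1)}{d-p}$ (equivalently, that the $p=1$ multilinear estimate enters with constant $1$), so that the specific choice $p = \frac{2d}{d+2}$ returns exactly $\frac{2(d-1)}{d}$. The remaining ingredients — H\"older's inequality on the bounded domain and the density of $C_0^\infty(U)$ in $H^{1,2}_0(U)$ — are routine.
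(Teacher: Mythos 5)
Your proposal is correct and follows exactly the paper's route: apply the Gagliardo--Nirenberg--Sobolev inequality with exponent $p=\frac{2d}{d+2}$ (so the Sobolev conjugate is $2$ and the tracked constant $\frac{p(d-1)}{d-p}$ equals $\frac{2(d-1)}{d}$), then raise the exponent on the gradient from $\frac{2d}{d+2}$ to $2$ by H\"older on the bounded set $U$, picking up the factor $|U|^{1/d}$. The density step you spell out is left implicit in the paper, but the argument is the same.
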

\begin{proof}
By applying the Gagliardo--Nirenberg--Sobolev inequality \cite[Section~5.6, Theorem~1]{E10} and H\"{o}lder's inequality, we obtain
\begin{align*}
\|f\|_{L^2(U)}& \leq \frac{\frac{2d}{d+2} (d-1)}{d-\frac{2d}{d+2}} \| \nabla f \|_{L^{\frac{2d}{d+2}}(U)} \\
& \leq \frac{2(d-1)}{d} |U|^{\frac{1}{d}} \| \nabla f\|_{L^2(U)} \quad \text{for all $f \in H^{1,2}_0(U)$.} 
\end{align*}
\end{proof}
\begin{rmk}
Let $d \geq 3$. From \cite[Section~5.6, Theorem~1]{E10} and H\"{o}lder's inequality, one can also derive the following inequalities:
\begin{equation} \label{remsobol}
\|f\|_{L^2(U)} \leq |U|^{\frac{1}{d}} \|f\|_{L^{\frac{2d}{d-2}}(U)} \leq \frac{2(d-1)}{d-2} |U|^{\frac{1}{d}}  \| \nabla f\|_{L^2(U)} \quad \text{for all $f \in H^{1,2}_0(U)$.}
\end{equation}
However, in comparison with \eqref{poincareineq}, since $\frac{2(d-1)}{d} \leq \frac{2(d-1)}{d-2}$, the inequality \eqref{poincareineq} provides a better constant than \eqref{remsobol}.
\end{rmk}
\section{Proof of main results} \label{profofmainre}
We now prove our main results by using the results from Section~\ref{framew} as the main ingredient.  
In fact, under the assumption (a) in Theorem~\ref{maintheor}, the existence, uniqueness, and boundedness of solutions were established in \cite[Theorem~1.1]{L25jm}. However, the main contribution of the present paper lies in addressing assumption (b).  
In particular, under (b), we require only the condition $f \in L^{\frac{2d}{d+2}}(U)$, and we explicitly present the constants arising from the energy  and $L^2$-estimates, so that we can explicitly compute the constant $C>0$ in \eqref{l2estimate}.
\\
\centerline{} 
{\bf Proof of Theorem \ref{maintheor}}\\
{\sf \underline{Step 1}}: Before proving the estimates in (i)--(iii), we first establish the existence and uniqueness of the weak solution to \eqref{maineq2}.  
First, the uniqueness immediately follows from \cite[Theorem~1.1(ii)]{L25jm}.  
Under the assumption (a) in Theorem~\ref{maintheor}, the existence of a weak solution $u \in H^{1,2}_0(U) \cap L^{\infty}(U)$ to \eqref{maineq2} follows from \cite[Theorem~1.1(i)]{L25jm}. Now, assume (b) in Theorem~\ref{maintheor}. Let $(f_n)_{n \geq 1} \subset L^{\infty}(U)$ be a sequence such that $\lim_{n \rightarrow \infty} f_n = f$ a.e. in $U$ and $|f_n| \leq |f|$ in $U$ for all $n \geq 1$. 
By \cite[Theorem~1.1(ii)]{L25jm}, for each $n \geq 1$, there exists $u_n \in H^{1,2}_0(U)  \cap L^{\infty}(U)$ satisfying
\begin{equation} \label{approequat}
\int_U \langle \gamma A \nabla u_n, \nabla \psi \rangle + \langle \mathbf{H}, \nabla u_n \rangle \psi + (c+\alpha) u_n \psi \, dx = \int_U f_n \psi \, dx \quad \text{for all } \psi \in C_0^{\infty}(U),
\end{equation}
and
$$
\|u_n\|_{H^{1,2}_0(U)} \leq K_5 \|f_n\|_{L^{\frac{2d}{d+2}}(U)} \leq K_5 \|f\|_{L^{\frac{2d}{d+2}}(U)},
$$
where $K_5 > 0$ is a constant independent of $n \geq 1$.  
Using the weak compactness and Sobolev's inequality, there exist $u \in H^{1,2}_0(U)$ and a subsequence of $(u_n)_{n \geq 1} \subset H^{1,2}_0(U)  \cap L^{\infty}(U)$, still denoted by $(u_n)_{n \geq 1}$, such that
\begin{equation} \label{weaklimiun}
\lim_{n \rightarrow \infty} u_n = u \quad \text{ weakly in $H^{1,2}_0(U)$} \quad \text{ and } \quad \lim_{n \rightarrow \infty} u_n = u \text{ weakly in $L^{\frac{2d}{d-2}}(U)$}.
\end{equation}
Hence, by passing to the limit in \eqref{approequat}, we conclude that $u$ is a weak solution to \eqref{maineq2}.\\ \\
{\sf \underline{Step 2}}: Under the assumption (a), we will prove the estimates (i), (ii) and (iii).  
Let $\rho \in H^{1,2}(U) \cap C(\overline{U})$ be a strictly positive function on $\overline{U}$ constructed as in Theorem~\ref{existrho}, and define the vector field $\mathbf{B}$ as in \eqref{divfreevect}.  
First, assume that condition (a) in Theorem~\ref{maintheor} holds. Let $u \in H^{1,2}_0(U) \cap L^{\infty}(U)$ be the unique weak solution to \eqref{maineq2}.  
Since \eqref{maineqfir} holds, it follows by Theorem~\ref{mainequivthm} that equation~\eqref{maineqfdiv} is satisfied.  
By the approximation argument in Proposition \ref{propapprobdd}, \eqref{maineqfdiv} holds for all $\varphi \in H^{1,2}_0(U) \cap L^{\infty}(U)$.  
Substituting $u$ for $\varphi$, we obtain  
\begin{equation} \label{mainfdivrepu}
\int_U \langle \rho (\gamma A) \nabla u, \nabla u \rangle \, dx + \int_U \langle \rho \mathbf{B}, \nabla u \rangle u \, dx + \int_U \rho (c+\alpha) u^2 \, dx = \int_U f u \rho \, dx.
\end{equation}  
Since $\frac{1}{2} u^2 \in H^{1,2}_0(U)$ and $\nabla \left(\frac{1}{2} u^2\right) = u \nabla u$ in $U$, and since \eqref{divfreecondi} holds for all $\varphi \in H^{1,2}_0(U)$, we have  
\begin{equation} \label{diverfreepropert}
\int_{U} \langle \rho \mathbf{B}, \nabla u \rangle u \, dx =\int_{U} \left \langle \rho \mathbf{B}, \nabla \left(\frac12 u^2 \right) \right \rangle\,dx = 0.
\end{equation}
Thus, from \eqref{mainfdivrepu} and H\"{o}lder's inequality, it follows that
\begin{equation} \label{energyorign}
\left( \min_{\overline{U}} \rho \right) \left(\gamma \lambda  \| \nabla u \|^2_{L^2(U)} + \alpha \|u\|^2_{L^2(U)} \right) \leq \left( \max_{\overline{U}} \rho \right) \|f \|_{L^{\frac{2 \hat{d}}{\hat{d}+2}}(U)} \|u \|_{L^{\frac{2 \hat{d}}{\hat{d}-2}}(U)}.
\end{equation}
Particularly, observe that if $d=2$, then
\begin{equation} \label{2dimpoinc}
\|u \|_{L^{\frac{2 \hat{d}}{\hat{d}-2}}(U)} \leq \frac{\frac{\hat{d}}{\hat{d}-1}}{2 - \frac{\hat{d}}{\hat{d}-1}} \| \nabla u \|_{L^{\frac{\hat{d}}{\hat{d}-1}}(U)} \leq \frac{\hat{d}}{\hat{d}-2} |U|^{\frac{1}{2} - \frac{1}{\hat{d}}} \| \nabla u \|_{L^2(U)}.
\end{equation}
Therefore, Theorem~\ref{existrho}, \eqref{energyorign}, \eqref{2dimpoinc}, and \eqref{remsobol} imply
\begin{equation} \label{energybasi}
\gamma \lambda \| \nabla u \|^2_{L^2(U)} + \alpha \|u\|^2_{L^2(U)} \leq K_1 \hat{k} \|f \|_{L^{\frac{2 \hat{d}}{\hat{d}+2}}(U)} \| \nabla u \|_{L^2(U)}.
\end{equation}
Applying Young's inequality to \eqref{energybasi}, we have
\begin{equation} \label{enerl2estima}
\frac{\gamma \lambda}{2} \| \nabla u \|^2_{L^2(U)} + \alpha \|u\|^2_{L^2(U)} \leq \frac{K_1^2 \hat{k}^2}{2 \gamma \lambda} \|f \|^2_{L^{\frac{2 \hat{d}}{\hat{d}+2}}(U)}.
\end{equation}
Hence, \eqref{energyesma} follows. Applying Proposition~\ref{mainpoincare} to \eqref{enerl2estima}, \eqref{newl2estimviaen} is obtained.\\
Meanwhile, it follows from \eqref{mainfdivrepu}, \eqref{diverfreepropert}, Proposition~\ref{mainpoincare} and H\"{o}lder's inequality that
\begin{equation} \label{mainl2estimre}
\left( \min_{\overline{U}} \rho \right) \left( \frac{d^2 \gamma \lambda}{4(d-1)^2 |U|^{\frac{2}{d}}} + \alpha \right) \|u \|^2_{L^2(U)} \leq \left( \max_{\overline{U}} \rho \right) \|f \|_{L^2(U)} \|u \|_{L^2(U)}.
\end{equation}
Applying Theorem~\ref{existrho} to \eqref{mainl2estimre}, \eqref{originl2estima} follows. \\
On the other hand, substituting $u$ for $\varphi$ in \eqref{mainfdivrepu} and using \eqref{diverfreepropert}, Proposition~\ref{mainpoincare} and Theorem \ref{existrho}, we have
$$
\left( \frac{d^2 \gamma \lambda}{4K_1(d-1)^2 |U|^{\frac{2}{d}}}  +\alpha  \right)\|u\|_{L^2(U, \rho dx)}^2  \leq \|f\|_{L^2(U, \rho dx)} \|u \|_{L^2(U, \rho dx)},
$$
and hence \eqref{contraestiml2} follows from Theorem \ref{existrho}. Finally, (iii) holds by (i) and (ii). \\ \\
{\sf \underline{Step 3}}: Under the assumption (b), we will prove the estimates (i), (ii) and (iii).  Indeed, based on the approximations in {\sf Step 1} and using the results in {\sf Step 2}, we obtain the estimates (i)--(iii) where $u$ and $f$ are replaced by $u_n$ and $f_n$, respectively. Using the fact that $|f_n| \leq |f|$ in $U$ and the limit in \eqref{weaklimiun}, the assertion follows. \\
\centerline{}
\centerline{}
{\bf Proof of Corollary \ref{pineroestim}}\\
Since ${\rm div} A \in L^2(U, \mathbb{R}^d)$, using integration by parts in Proposition \ref{convdivnon}, $\mathcal{L} \Psi \in L^2(U)$ and $\Psi$ is a weak solution to \eqref{maineq2} where $f$ is replaced by $\mathcal{L} \Psi$. By linearity, $u-\Psi$ is a weak solution to \eqref{maineq2} where $f$ is replaced by $f-\mathcal{L} \Psi$. Applying Theorem \ref{maintheor} to $u-\Psi$, the assertion follows.
\section{Discussion} \label{conclrema}
In this paper, we explicitly computed the constant $C>0$ arising in the $L^2$-estimate \eqref{l2estimate} discussed in the Introduction, as stated in Theorem~\ref{maintheor}(iii). The constant $C>0$ in Theorem \ref{maintheor}(iii) satisfies the following bound:
\begin{equation} \label{constatcompar}
C \leq \min \left( \frac{4K_1 (d-1)^2 |U|^{\frac{2}{d}} }{d^2 \gamma \lambda},\; \frac{\sqrt{K_1}}{\alpha} \right),
\end{equation}
since
$$
K_1\left( \frac{d^2 \gamma \lambda}{4(d-1)^2 |U|^{\frac2d}} + \alpha \right)^{-1} \leq  \frac{4K_1 (d-1)^2 |U|^{\frac{2}{d}} }{d^2 \gamma \lambda}
$$
and
$$
K_1^{\frac12}\left( \frac{d^2 \gamma \lambda}{4K_1(d-1)^2 |U|^{\frac{2}{d}}}  +\alpha  \right)^{-1}	\leq \frac{\sqrt{K_1}}{\alpha}.
$$
Thus, $C \leq \frac{\sqrt{K_1}}{\alpha} \leq \frac{K_1}{\alpha}$, which shows that the $L^2$-estimate in Theorem~\ref{maintheor}(iii) is strictly better than \eqref{l2contraesticl}.
We emphasize that the constant $K_1 \geq 1$ originates from Theorem~\ref{existrho}(ii) and is independent of both $\gamma$ and $\alpha$. Consequently, the constant $C$ in Theorem~\ref{maintheor}(iii) becomes smaller as $\alpha$ or $\gamma$ increases.
For a more precise quantitative analysis on the constant $C$, it will be important to compute $K_1 \geq 1$ explicitly. In fact, $K_1$ is a constant derived from the Harnack inequality, and as is well known, a constant appearing in the Harnack inequality obtained through general theoretical proofs typically grows quite large due to the complexity of the underlying arguments.
Developing mathematical techniques for obtaining sharp bounds on $K_1 \geq 1$ will provide significant insight into the quantitative analysis of the constant $C>0$ established in this paper.

\centerline{}
\centerline{}
Haesung Lee\\
Department of Mathematics and Big Data Science,  \\
Kumoh National Institute of Technology, \\
Gumi, Gyeongsangbuk-do 39177, Republic of Korea, \\
E-mail: fthslt@kumoh.ac.kr, \; fthslt14@gmail.com
\end{document}